\documentclass[12pt,a4paper]{amsart}
\usepackage{amsfonts}
\usepackage{amsthm}
\usepackage{amsmath}
\usepackage{amscd}
\usepackage[latin2]{inputenc}
\usepackage{t1enc}
\usepackage[mathscr]{eucal}
\usepackage{indentfirst}
\usepackage{graphicx}
\usepackage{graphics}
\usepackage{pict2e}
\usepackage{epic}
\usepackage{url}

\usepackage{cite}
\usepackage{color}
\usepackage{epic}
\numberwithin{equation}{section}
\usepackage[margin=2.9cm]{geometry}
\usepackage{cite}
\usepackage{color}
\usepackage{epic}

\theoremstyle{plain}
\newtheorem{Th}{Theorem}[section]
\newtheorem{Lemma}[Th]{Lemma}
\newtheorem{Cor}[Th]{Corollary}

 \theoremstyle{definition}

\newtheorem{Rem}[Th]{Remark}
\newtheorem{?}[Th]{Problem}

\newcommand{\Hom}{{\rm{Hom}}}

\renewcommand{\wr}{\mathrm{wr}}
\newcommand{\ch}{\mathrm{ch}}

\begin{document}

\title[Colorings]{Sidorenko's conjecture, colorings \\and independent sets}

\author[P. Csikv\'ari]{P\'{e}ter Csikv\'{a}ri}

\address[P\'{e}ter Csikv\'{a}ri]{Massachusetts Institute of Technology \\ Department of Mathematics \\
Cambridge MA 02139 \& MTA-ELTE Geometric and Algebraic Combinatorics Research Group
\\ H-1117 Budapest
\\ P\'{a}zm\'{a}ny P\'{e}ter s\'{e}t\'{a}ny 1/C \\ Hungary}

\email{peter.csikvari@gmail.com}

\author[Z. Lin]{Zhicong Lin}
\address[Zhicong Lin]{School of Science, Jimei University, Xiamen 361021,
P.R. China
\& CAMP, National Institute for Mathematical Sciences, Daejeon 34047, Republic of Korea} 
\email{lin@nims.re.kr}

\thanks{The first author  is partially supported by the National Science Foundation under grant no. DMS-1500219, and by the MTA R\'enyi "Lend\"ulet" Groups and Graphs Research Group, and by the ERC Consolidator Grant 648017, and by the Hungarian National Research, Development and Innovation Office, NKFIH grants K109684 and NN114614, a Slovenian-Hungarian grant. The second author is supported by the National Science Foundation of China grant no.~11501244.}

 \subjclass[2010]{Primary: 05C35.}

 \keywords{Colorings, independent sets, large girth graphs} 

\begin{abstract} Let $\hom(H,G)$ denote the number of homomorphisms from a graph $H$ to a graph $G$. Sidorenko's conjecture asserts that for any bipartite graph $H$, and a graph $G$ we have
$$\hom(H,G)\geq v(G)^{v(H)}\left(\frac{\hom(K_2,G)}{v(G)^2}\right)^{e(H)},$$
where $v(H),v(G)$ and $e(H),e(G)$ denote the number of vertices and edges of the graph $H$ and $G$, respectively.
In this paper we prove Sidorenko's conjecture for certain special graphs $G$: for the complete graph $K_q$ on $q$ vertices, for a $K_2$ with a loop added at one of the end vertices, and for a path on $3$ vertices with a loop added at each vertex. These cases correspond to counting colorings, independent sets and Widom-Rowlinson configurations of a graph $H$. For instance, for a bipartite graph $H$ the number of $q$-colorings $\ch(H,q)$ satisfies
$$\ch(H,q)\geq q^{v(H)}\left(\frac{q-1}{q}\right)^{e(H)}.$$
In fact, we will prove that in the last two cases (independent sets and Widom-Rowlinson configurations) the graph $H$ does not need to be bipartite.
In all cases, we first prove a certain correlation inequality which implies Sidorenko's conjecture in a stronger form.

\end{abstract}

\maketitle

\section{Introduction} Let $v(G)$ and $e(G)$ denote the number of vertices and edges of a graph $G$. For a graph $H$ and $G$ let $\hom(H,G)$ denote the number of {\em homomorphisms} from $H$ to $G$, i.e., the number of maps $\varphi:V(H)\to V(G)$ such that $(\varphi(u),\varphi(v))\in E(G)$ whenever $(u,v)\in E(H)$. 

Sidorenko's conjecture \cite{S1,S2} claims that for a bipartite graph $H$ on $v(H)=n$ vertices and $e(H)$ edges, and an arbitrary graph $G$ (possibly with loops) we have
$$\hom(H,G)\geq v(G)^{n}\left(\frac{\hom(K_2,G)}{v(G)^2}\right)^{e(H)}.$$

There has been many work on Sidorenko's conjecture, see for instance \cite{BR,CFS,CL,CKLL,H,KLL,LS,L,P,S1,S2,Sz1,Sz2}. It is known that Sidorenko's conjecture holds true if $H$ belongs to certain classes of graphs, for instance if $H$ is a tree or a complete bipartite graph. In this paper we follow another route: we prove Sidorenko's conjecture when $G$ is fixed, namely when $G$ is a complete graph (counting colorings) or a complete graph on $2$ vertices with a loop added at one of the vertices (counting independent sets), or a path on $3$ vertices with a loop added at each vertex (counting Widom-Rowlinson configurations). The idea of fixing the target graph is not new, in the paper \cite{L} Lov\'asz also follows the same route. Lov\'asz proves  the continuous version of Sidorenko's conjecture when the target graphon (the continuous version of a graph) is sufficiently close to the constant graphon in some metric, see Theorem 4.1 and 5.1 in \cite{L}. We remark that it is slightly inconvenient in these theorems that the distance between the target graphon and the constant graphon depends on the number of edges of the bipartite graph $H$. We will not assume such condition in our results.
\bigskip

When $G=K_q$, the complete graph on $q$ vertices, then $\hom(H,K_q)=\ch(H,q)$ counts the proper colorings of $H$ with $q$ colors. In this case Sidorenko's conjecture states that
$$\ch(H,q)\geq q^n\left(\frac{q-1}{q}\right)^{e(H)}.$$

When $G$ is a $K_2$ with a loop added at one vertex then $\hom(H,G)=i(H)$ counts the number of independent sets of the graph $H$. In this case Sidorenko's conjecture states that
$$i(H)\geq 2^n\left(\frac{3}{4}\right)^{e(H)}.$$

Note that both colorings and independent sets are related to certain statistical physical models, the {\em Potts model} and the {\em hard-core model}. Another notable statistical physical model is the so-called {\em Widom-Rowlinson model}~\cite{WR,BHW}. This model corresponds to the case when $G$ is a path on $3$ vertices with a loop added at each vertex (see Fig.~\ref{widom}). We denote this graph by $P_3^{\circ}$, then $\hom(H,P_3^{\circ})=\wr(H)$ counts the number of $3$--colorings of $H$ with colors white, red and blue such that a red and a blue vertex cannot be adjacent, but there is no other restriction. Indeed, for a homomorphism $\phi:V(H)\to P_3^{\circ}$ let us consider the pre-image sets $A=\phi^{-1}(\{a\})$ (red), $B=\phi^{-1}(\{b\})$ (white) and $C=\phi^{-1}(\{c\})$ (blue) then the only restriction is that a red and a blue vertex cannot be adjacent. This model has also been extensively studied in extremal graph theory, see the recent work by Cutler and Radcliffe~\cite{CR} and by Cohen, Perkins and Tetali~\cite{CPT}.
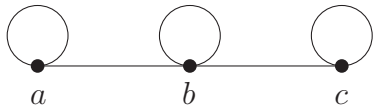
\begin{figure}[h]
\setlength {\unitlength} {0.8mm}
\begin {picture} (65,25) \setlength {\unitlength} {1mm}
\thinlines
\put(5,7){\circle*{1.8}}
\put(5,11){\circle{8}} \put(4,2){$a$}
\put(5,7){\line(1,0){20}}

\put(25,7){\circle*{1.8}}
\put(25,11){\circle{8}} \put(24,2){$b$}
\put(25,7){\line(1,0){20}}

\put(45,7){\circle*{1.8}}\put(44,2){$c$}
\put(45,11){\circle{8}}

\end{picture}

\caption{\label{widom}The graph $P_3^{\circ}$}
\end {figure}

In the case of Widom-Rowlinson configuration Sidorenko's conjecture states that
$$\wr(H)\geq 3^n\left(\frac{7}{9}\right)^{e(H)}$$
as $v(P_3^{\circ})=3$, and $\hom(K_2,P_3^{\circ})=7$.

In this short note we prove all these claims. In fact, in all cases we prove a correlation inequality from which the claim follows in a stronger form.

\begin{Th} \label{coloring_bipartite} Let $H=(A,B,E)$ be a bipartite graph. Let $c$ be a uniform random proper coloring of $H$ with $q$ colors. Then,
$$\mathbb{P}(c(u)=c(v))\leq \frac{1}{q}$$
if $u$ and $v$ are in different parts of the bipartite graph, and
$$\mathbb{P}(c(u)=c(v))\geq \frac{1}{q}$$
if $u$ and $v$ are in the same part of the bipartite graph. The former part is equivalent with the statement that for any bipartite graph $H$ with a missing edge $e$ between the two parts we have
\begin{equation}\label{eq:col}
\frac{\ch(H+e,q)}{\ch(H,q)}\geq \frac{q-1}{q}.
\end{equation}
\end{Th}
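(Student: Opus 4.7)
The plan is to use a Kempe chain swapping argument. The equivalence between $\mathbb{P}(c(u)=c(v)) \leq 1/q$ (for $u,v$ in different parts) and the ratio inequality \eqref{eq:col} is immediate: the proper colorings of $H+e$ are exactly those proper colorings of $H$ in which the endpoints of $e$ receive different colors, so $\ch(H+e,q) = \ch(H,q)\cdot\mathbb{P}(c(u)\neq c(v))$. Moreover, by the symmetry of permuting colors, if one sets $S = |\{c : c(u)=c(v)=1\}|$ and $D = |\{c : c(u)=1,\,c(v)=2\}|$, then $\ch(H,q) = qS + q(q-1)D$ and $\mathbb{P}(c(u)=c(v)) = qS/\ch(H,q)$. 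A short calculation shows that $\mathbb{P}(c(u)=c(v)) \leq 1/q$ is equivalent to $S \leq D$, while $\mathbb{P}(c(u)=c(v)) \geq 1/q$ is equivalent to $S \geq D$. So it suffices to produce an injection in each direction.

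The injection will come from a single operator $T$. Given a proper coloring $c$, let $K(c)$ be the connected component of the subgraph of $H$ induced by vertices of color $1$ or $2$ that contains $v$ (the Kempe chain of colors $1,2$ through $v$), and let $T(c)$ be the coloring obtained from $c$ by swapping colors $1$ and $2$ inside $K(c)$ while leaving all other vertices unchanged. Then $T(c)$ is again proper and $T\circ T$ is the identity, so $T$ is a bijection on the set of all proper colorings of $H$. The key structural observation, which uses bipartiteness in an essential way, is that \emph{inside $K(c)$ all vertices of one part carry one of the two colors and all vertices of the other part carry the other color}. This holds because $K(c)$ is a connected bipartite subgraph along whose edges both the colors and the parts alternate, so the parity of graph distance from $v$ inside $K(c)$ determines simultaneously the part and the color of a vertex.

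With this bipartite Kempe-chain lemma in hand, both inequalities fall out. In the different-part case ($u\in A$, $v\in B$), suppose $c$ has $c(u)=c(v)=1$; then $v\in B\cap K(c)$ has color $1$, so $A\cap K(c)$ has color $2$, forcing $u\notin K(c)$ because $c(u)=1$. Therefore $T(c)$ is a proper coloring in which $u$ has color $1$ and $v$ has color $2$, and restricting $T$ gives an injection witnessing $S\leq D$. In the same-part case ($u,v\in A$), suppose $c$ has $c(u)=1$ and $c(v)=2$; then $v\in A\cap K(c)$ has color $2$, so $A\cap K(c)$ has color $2$ throughout, and again $u\notin K(c)$. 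Hence $T(c)$ is a proper coloring in which $u$ and $v$ both have color $1$, yielding an injection witnessing $D\leq S$. The main point that requires careful verification is the bipartite Kempe-chain structural lemma; once it is in place, the two cases are essentially identical applications of a single involution.
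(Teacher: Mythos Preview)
Your proof is correct and follows essentially the same approach as the paper: the Kempe-chain swap $T$ is exactly the ``flip the $\{1,2\}$-component'' move the paper uses, and your bipartite structural lemma is precisely their observation that within a bipartite $2$-colored component the part determines the color. The only cosmetic difference is that the paper phrases the argument as conditioning on the colors outside $\{1,2\}$ and splitting into same-component/different-component cases, whereas you package both cases into a single explicit injection via the involution $T$.
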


\begin{Cor} \label{count_coloring} For any bipartite graph $H$ on $n$ vertices and $e(H)$ edges we have 
$$\ch(H,q)\geq q^{n}\left(\frac{q-1}{q}\right)^{e(H)}.$$
Furthermore, if the graph $H$ contains $\varepsilon n$ vertex disjoint cycles of length at most $\ell$, then there is some $c_q(\varepsilon, \ell)>1$ such that
$$\ch(H,q)\geq c_q(\varepsilon, \ell)^nq^{n}\left(\frac{q-1}{q}\right)^{e(H)}.$$
\end{Cor}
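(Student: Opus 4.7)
The first inequality would follow by iterating inequality~\eqref{eq:col}. I would fix a bipartition $(A,B)$ of $H$, order the edges of $H$ arbitrarily as $e_1,\dots,e_m$, and let $H_i$ denote the subgraph on $V(H)$ with edge set $\{e_1,\dots,e_i\}$. Each $H_i$ remains bipartite with respect to $(A,B)$, so $e_{i+1}$ is a missing edge between parts of $H_i$ and~\eqref{eq:col} gives $\ch(H_{i+1},q)/\ch(H_i,q)\geq (q-1)/q$. Starting from $\ch(H_0,q)=q^n$ and multiplying $m=e(H)$ such factors yields the first inequality.

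For the stronger bound, let $C_1,\dots,C_k$ with $k\geq \varepsilon n$ be the given vertex-disjoint cycles of length at most $\ell$; all have even length because $H$ is bipartite. In each $C_i$ I would select one edge $f_i=\{u_i,v_i\}$; the complementary path $P_i=C_i-f_i$ has odd length $\ell_i-1\leq \ell-1$ and still connects $u_i$ to $v_i$ inside $H-f_i$. My plan is to order the edges of $H$ so that $f_1,\dots,f_k$ come last. Applying~\eqref{eq:col} to the first $e(H)-k$ edge additions as above produces the factor $((q-1)/q)^{e(H)-k}$, and it remains to show that each of the last $k$ additions contributes a strictly improved factor of the form $(q-1)/q\cdot(1+\eta)$ for some $\eta=\eta(q,\ell)>0$ that does not depend on the step.

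The crucial ingredient is the following sublemma: \emph{if $G$ is bipartite with a missing edge $\{u,v\}$ between parts and $u,v$ are joined in $G$ by a path of odd length at most $\ell-1$, then}
$$\frac{\ch(G+uv,q)}{\ch(G,q)}\geq \frac{q-1}{q}\bigl(1+\eta(q,\ell)\bigr).$$
The natural target value, motivated by the exact identity $\ch(C_{2m},q)/\ch(P_{2m-1},q)=\frac{q-1}{q}(1+(q-1)^{-(2m-1)})$ for a stand-alone cycle, is $\eta(q,\ell)=(q-1)^{-(\ell-1)}$. Proving this sublemma will be the main obstacle: one must verify that the extra edges of $G$ outside the path cannot destroy the gain coming from the short odd connection between $u$ and $v$. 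My first attempt would be to condition on the coloring of $V(G)$ outside the interior of the path, reducing the conditional distribution to a list-coloring problem on the path, and then invoke a list-coloring version of Theorem~\ref{coloring_bipartite} inside each conditional measure. A parallel approach would be to establish a monotonicity statement saying that adding a bipartite edge to $G$ cannot increase $\mathbb{P}(c(u)=c(v))$, which would reduce the sublemma to the explicit path computation above.

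Granted the sublemma, each added edge $f_i$ contributes a factor at least $(q-1)/q\cdot(1+\eta(q,\ell))$, because the path $P_i$ still lies inside the intermediate graph when $f_i$ is added. Combining everything,
$$\ch(H,q)\geq q^n\Bigl(\frac{q-1}{q}\Bigr)^{e(H)}(1+\eta(q,\ell))^{k}\geq q^n\Bigl(\frac{q-1}{q}\Bigr)^{e(H)}\bigl((1+\eta(q,\ell))^{\varepsilon}\bigr)^n,$$
so the corollary would hold with $c_q(\varepsilon,\ell)=(1+\eta(q,\ell))^{\varepsilon}>1$.
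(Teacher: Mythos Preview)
Your argument for the first inequality is correct and matches the paper's exactly: telescope over the edges using~\eqref{eq:col} and start from $\ch(H_0,q)=q^n$.

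For the second inequality, there is a genuine gap: the sublemma you isolate is never proved, and neither of your two suggested attacks is routine. The list-coloring reduction does not obviously preserve the $\eta$-gain once the lists at the path endpoints depend on the rest of $G$, and the monotonicity statement ``adding a bipartite edge cannot increase $\mathbb{P}(c(u)=c(v))$'' is a new correlation inequality, not a consequence of Theorem~\ref{coloring_bipartite}. So as written the proof of the second part is incomplete.

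The paper sidesteps this difficulty entirely by changing the target of the telescoping. Instead of peeling \emph{all} edges and asking for an improved ratio at the special edges $f_i$, it peels only the edges \emph{outside} the cycles: with $S=C_1\cup\cdots\cup C_k\cup(\text{isolated vertices})$ and $r=e(H)-e(S)$, inequality~\eqref{eq:col} gives $\ch(H,q)\geq\ch(S,q)\bigl(\tfrac{q-1}{q}\bigr)^r$. The extra factor then comes from the \emph{exact} formula $\ch(C_t,q)=(q-1)^t+(q-1)$, which yields
\[
\ch(S,q)\geq q^{\,n-e(S)}(q-1)^{e(S)}\bigl(1+(q-1)^{-(\ell-1)}\bigr)^{k},
\]
and hence $c_q(\varepsilon,\ell)=\bigl(1+(q-1)^{-(\ell-1)}\bigr)^{\varepsilon}$, the very value you were aiming for. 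The point is that the gain is harvested from the chromatic polynomial of the isolated cycles, where it is explicit, rather than from an improved edge-ratio inequality inside the full graph; no sublemma is needed.
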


We will see that the first inequality in Corollary~\ref{count_coloring} is asymptotically tight for sparse graphs not containing short cycles if $q$ is (much) larger than the largest degree of the graph $H$, see Remark~\ref{rem}.
\bigskip

We will prove a very similar result for the number of independent sets. In fact, in the case of independent sets we can drop the condition of bipartiteness of $H$. On the other hand, this result is only asymptotically tight in very trivial cases unlike the bound on the number of colorings, but at least it shows that Sidorenko's conjecture holds true in this case. This result is much simpler than the claim on colorings.

\begin{Th} \label{independent}  For any graph $H$ and its edge $e$ we have 
\begin{equation}\label{eq:ind}
\frac{i(H)}{i(H-e)}\geq \frac{3}{4}.
\end{equation}
\end{Th}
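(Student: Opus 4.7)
The plan is to write $i(H-e) = i(H) + \Delta$, where $\Delta$ is the number of independent sets of $H-e$ that contain \emph{both} endpoints of $e$, and then to prove $i(H) \geq 3\Delta$. From these two facts the inequality $i(H)/i(H-e) \geq 3/4$ follows by a one-line calculation, so the entire argument reduces to controlling $\Delta$ from above in terms of $i(H)$.

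For the structural part, write $e=uv$ and let $W := V(H) \setminus (\{u,v\} \cup N_H(u) \cup N_H(v))$. Every independent set of $H-e$ containing both $u$ and $v$ is of the form $\{u,v\} \cup T$ with $T$ an independent set of $H[W]$, since such an extension cannot meet $N(u) \cup N(v)$. This gives $\Delta = i(H[W])$. The decomposition $i(H-e) - i(H) = \Delta$ then follows because the independent sets of $H$ and of $H-e$ that do not contain both $u$ and $v$ are literally the same collection.

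To obtain $i(H) \geq 3\, i(H[W])$, the plan is to exhibit an explicit injection: to each independent set $T$ of $H[W]$ associate the three independent sets
\[
T,\quad T\cup\{u\},\quad T\cup\{v\}
\]
of $H$. Each of these is independent in $H$ precisely because $W$ is disjoint from both $N(u)$ and $N(v)$; distinct $T$'s or distinct choices among the three extensions produce distinct sets, since $T$ and the relevant intersection with $\{u,v\}$ can be read off from the set itself.

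I do not anticipate a real obstacle: unlike the coloring case, no bipartiteness hypothesis is required, and the whole argument is a short counting with an injection. The only care needed is in setting up $W$ so that $T\cup\{u\}$ and $T\cup\{v\}$ remain independent in $H$, which is automatic from the definition. Once $i(H) \geq 3\, i(H[W])$ and $i(H-e) = i(H) + i(H[W])$ are in hand, the inequality
\[
\frac{i(H)}{i(H-e)} \;=\; \frac{i(H)}{i(H)+i(H[W])} \;\geq\; \frac{3i(H[W])}{3i(H[W])+i(H[W])} \;=\; \frac{3}{4}
\]
finishes the proof.
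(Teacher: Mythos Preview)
Your proof is correct and follows essentially the same approach as the paper. The paper decomposes $i(H)$ and $i(H-e)$ according to which of $u,v$ lie in the independent set, observes that the only discrepancy is the term $i(H-\{u,v\}-N(u)-N(v))=i(H[W])$, and then uses the monotonicity $i(H[W])\le \min\{i(H-\{u,v\}),\,i(H-\{u,v\}-N(u)),\,i(H-\{u,v\}-N(v))\}$ to conclude $i(H)\ge 3\,i(H[W])$; your injection $T\mapsto T,\,T\cup\{u\},\,T\cup\{v\}$ is exactly a concrete witness of these three inequalities, landing one copy of $T$ in each of the paper's three classes.
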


\begin{Cor} \label{independent_count} For any graph $H$ on $n$ vertices and $e(H)$ edges we have
$$i(H)\geq 2^n\left(\frac{3}{4}\right)^{e(H)}.$$
Furthermore, if the graph $H$ is connected then,
$$i(H)> \frac{3}{4} \left(\frac{1+\sqrt{5}}{3}\right)^n\cdot 2^n\left(\frac{3}{4}\right)^{e(H)}.$$
\end{Cor}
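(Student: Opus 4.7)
My plan is to derive both inequalities by iterating the edge-deletion estimate \eqref{eq:ind} from Theorem~\ref{independent}, choosing the right stopping point in each case.

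For the first inequality I would simply peel off the edges of $H$ one at a time. Enumerating $E(H) = \{e_1,\ldots,e_m\}$ with $m = e(H)$ and setting $H_0 = H$, $H_k = H_{k-1} - e_k$, the inequality \eqref{eq:ind} applied at each step gives
\[
i(H) = i(H_0) \geq \frac{3}{4}\, i(H_1) \geq \cdots \geq \left(\frac{3}{4}\right)^{e(H)} i(H_m) = 2^n \left(\frac{3}{4}\right)^{e(H)},
\]
since $H_m$ is the edgeless graph on $n$ vertices and hence has $2^n$ independent sets.

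For the connected strengthening the idea is to stop peeling once only a spanning tree remains. Fixing any spanning tree $T$ of $H$ (which has $n-1$ edges) and removing only the $e(H)-(n-1)$ non-tree edges, the same iteration of \eqref{eq:ind} yields
\[
i(H) \geq \left(\frac{3}{4}\right)^{e(H)-(n-1)} i(T) = \left(\frac{3}{4}\right)^{e(H)} \left(\frac{4}{3}\right)^{n-1} i(T).
\]
I would then invoke the classical result of Prodinger and Tichy that, among all trees on $n$ vertices, the path $P_n$ minimizes the number of independent sets, giving $i(T) \geq i(P_n) = F_{n+2}$, the $(n+2)$-nd Fibonacci number. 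A direct simplification shows that the claimed inequality is equivalent to
\[
F_{n+2} > \left(\frac{1+\sqrt{5}}{2}\right)^{n} \qquad (n \geq 1),
\]
which is an easy consequence of Binet's formula via the identity $\left(\tfrac{1+\sqrt{5}}{2}\right)^{2} - \sqrt{5} = \left(\tfrac{1-\sqrt{5}}{2}\right)^{2}$: this reduces the desired estimate to $\left(\tfrac{1+\sqrt{5}}{2}\right)^{n} > \left(\tfrac{1-\sqrt{5}}{2}\right)^{n}$, which is clear since $(1+\sqrt{5})/2 > 1 > |(1-\sqrt{5})/2|$.

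The only step not reducible to Theorem~\ref{independent} itself, and thus the main obstacle, is the tree-minimizer bound $i(T) \geq F_{n+2}$. If a self-contained argument is preferred, it can be proved by induction on $n$: deleting a leaf $v$ with neighbor $u$ gives $i(T) = i(T-v) + i(T-v-u)$, and the two summands can be bounded below by comparing with the path (respectively a path forest) via the log-submultiplicativity of the Fibonacci numbers. Everything else is routine bookkeeping on top of \eqref{eq:ind}.
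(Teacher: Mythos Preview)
Your proposal is correct and follows essentially the same route as the paper: iterate \eqref{eq:ind} down to the empty graph for the first bound, and down to a spanning tree for the second, then invoke the Prodinger--Tichy minimality of the path together with $i(P_n)=F_{n+2}>\phi^n$. The only cosmetic difference is that you verify $F_{n+2}>\phi^n$ via Binet's formula whereas the paper just says ``by induction (or by the explicit formula)''.
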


Finally, we prove an analogous theorem for Widom-Rowlinson configurations. 

\begin{Th}\label{wr-model}
For any graph $H$ and its edge $e$ we have
\begin{equation}\label{eq:wr}
\frac{\wr(H)}{\wr(H-e)}\geq\frac{7}{9}.
\end{equation}
\end{Th}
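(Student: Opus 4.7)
The plan is to derive Theorem \ref{wr-model} from a correlation inequality. For any graph $G$ and any two vertices $u,v\in V(G)$, let $N_{xy}(G)$ denote the number of WR-configurations $\phi$ of $G$ with $\phi(u)=x$ and $\phi(v)=y$. I would prove the correlation bound
\[
N_{ac}(G)\le\frac{\wr(G)}{9}.
\]
Applied to $G=H-e$ with $u,v$ the endpoints of $e$, and combined with $N_{ca}(H-e)=N_{ac}(H-e)$ coming from the $a\leftrightarrow c$ automorphism of $P_3^{\circ}$, this yields
\[
\wr(H)=\wr(H-e)-N_{ac}(H-e)-N_{ca}(H-e)\ge\left(1-\tfrac{2}{9}\right)\wr(H-e)=\tfrac{7}{9}\wr(H-e).
\]

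The same color-swap symmetry gives $N_{aa}=N_{cc}$, $N_{ab}=N_{cb}$, $N_{ba}=N_{bc}$, so
\[
\wr(G)=2N_{aa}+2N_{ab}+2N_{ba}+2N_{ac}+N_{bb}.
\]
Hence the desired bound $9N_{ac}\le\wr(G)$ is equivalent to $7N_{ac}\le 2N_{aa}+2N_{ab}+2N_{ba}+N_{bb}$, and it would follow from four injections from the WR-configurations counted by $N_{ac}$ into those counted by $N_{aa},N_{ab},N_{ba},N_{bb}$, taken with weights $2,2,2,1$.

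Three of these injections are straightforward because the color $b$ is universally compatible in $P_3^{\circ}$. Starting from a WR-configuration $\phi$ with $\phi(u)=a$ and $\phi(v)=c$: recoloring $v$ white produces a WR-configuration counted by $N_{ab}$; recoloring both $u$ and $v$ white produces one counted by $N_{bb}$; and recoloring $u$ white produces one counted by $N_{bc}$, which equals $N_{ba}$ via the color swap. Each recoloring is clearly reversible, hence injective.

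The substantive step is the injection into $N_{aa}$. Let $B=\phi^{-1}(b)$ and let $K$ be the connected component of $v$ in $G[V(G)\setminus B]$. Because the restriction of any WR-configuration to its non-white vertices must be constant on each connected component, $K$ is entirely blue while the component of $u$ is entirely red, so in particular $u\notin K$. Every edge of $G$ leaving $K$ must land in $B$ by definition of connected component, so flipping $K$ from blue to red yields a WR-configuration $\phi'$ with $\phi'(u)=\phi'(v)=a$; since $B$ is preserved by the flip, $K$ (and thence $\phi$) can be recovered from $\phi'$, establishing injectivity. This component-flip is the one step requiring genuine care; the rest is bookkeeping. (If $e$ happens to be a loop, then $\wr(H)=\wr(H-e)$ trivially since every vertex of $P_3^{\circ}$ carries a loop, so we may assume $e=uv$ with $u\ne v$.)
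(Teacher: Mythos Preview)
Your proof is correct and follows essentially the same route as the paper. Both arguments decompose $\wr(H-e)$ according to the colors at $u,v$, use the $a\leftrightarrow c$ symmetry to reduce to the five quantities $N_{aa},N_{ab},N_{ba},N_{ac},N_{bb}$, and the heart of both is the inequality $N_{ac}\le N_{aa}$, proved via the monochromatic-component structure of $V\setminus B$: the paper phrases this as a probabilistic argument conditioning on $B$, while you give the equivalent explicit component-flip injection. The only cosmetic difference is that the paper bounds $N_{ab},N_{ba},N_{bb}$ from below by $N_{aa}$ (recoloring $a\to b$) and then chains $N_{ac}\le N_{aa}$, whereas you bound them directly by $N_{ac}$ (recoloring $c\to b$, etc.); either works.
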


Then, as for the independent set case, the above result implies the following extended version of Sidorenko's  conjecture for the graph $P_3^{\circ}$.

\begin{Cor}\label{sido:wr}
For any graph $H$ on $n$ vertices and $e(H)$ edges we have
$$\wr(H)\geq 3^n\left(\frac{7}{9}\right)^{e(H)}.$$
Furthermore, if the graph $H$ is connected then,
$$\wr(H)>\frac{9}{10}\left(\frac{3(1+\sqrt{2})}{7}\right)^n\cdot 3^n\left(\frac{7}{9}\right)^{e(H)}.$$
\end{Cor}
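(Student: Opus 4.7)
The plan is to mirror the argument for the independent-set corollary~\ref{independent_count}. The first inequality is a direct induction on $e(H)$: by Theorem~\ref{wr-model}, deleting an edge multiplies $\wr$ by at most $9/7$, and the base case is the edgeless graph with $\wr(\overline{K_n}) = 3^n$. Connectedness is not needed here.

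For the stronger bound when $H$ is connected, the first move is to reduce to the case of a tree. Fix any spanning tree $T$ of $H$ and apply Theorem~\ref{wr-model} once for each of the $e(H)-(n-1)$ non-tree edges, obtaining $\wr(H)\geq (7/9)^{e(H)-n+1}\wr(T)$. Since $(7/9)\cdot (81/70)=9/10$, a short algebraic manipulation reduces the corollary to the claim
\[
\wr(T)>\tfrac{81}{70}(1+\sqrt{2})^n\quad\text{for every tree $T$ on $n$ vertices.}
\]
The constant $1+\sqrt{2}$ is precisely the largest eigenvalue of the adjacency matrix $A$ of $P_3^{\circ}$, and the reduced bound is essentially sharp on long paths.

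I would prove the tree bound in two steps. \emph{Step~1 (path minimality):} show that $\wr(T)\geq\wr(P_n)$ for every tree on $n$ vertices, in analogy with the classical Prodinger--Tichy result that paths minimise the independent-set count among trees. The argument is an exchange on the rooted recursion
\[
W(T_v)=\prod_i\wr(T_{c_i}),\qquad R(T_v)=\prod_i\bigl(W(T_{c_i})+R(T_{c_i})\bigr),
\]
together with $\wr(T_v)=W(T_v)+2R(T_v)$: at any vertex of degree $\geq 3$, reattach a smallest branch to the end of a longest branch and verify that this move does not increase $\wr$; iterating terminates at the path. \emph{Step~2 (path formula):} diagonalising $A$ with eigenvalues $1\pm\sqrt{2}$ and $1$ in the transfer-matrix expression gives
\[
\wr(P_n)=\mathbf{1}^{\top}A^{n-1}\mathbf{1}=\tfrac{1}{2}\bigl((1+\sqrt{2})^{n+1}+(1-\sqrt{2})^{n+1}\bigr),
\]
and a short calculation shows that $\wr(P_n)/(1+\sqrt{2})^n$ is minimised over $n\geq 1$ at $n=2$, where it equals $7/(3+2\sqrt{2})=21-14\sqrt{2}\approx 1.201$, comfortably exceeding $81/70\approx 1.157$ (equivalent to the easily checked $2\cdot 980^2<1389^2$).

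The main obstacle will be Step~1. While the analogous statement for independent sets is classical, the Widom--Rowlinson recursion contains the extra summand $2R(T_v)$ beyond the leading term $W(T_v)=\prod_i\wr(T_{c_i})$, so the branch-swap must preserve a more delicate combinatorial inequality about products of sums. Once path minimality is in hand, the spectral computation of $\wr(P_n)$ and the final numerical check are routine, and chaining everything back through the spanning-tree reduction yields the strict bound $\wr(H)>\frac{9}{10}\bigl(\frac{3(1+\sqrt{2})}{7}\bigr)^n 3^n (7/9)^{e(H)}$.
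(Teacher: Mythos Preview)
Your first inequality and the spanning-tree reduction for the second are exactly what the paper does. Where you diverge is the lower bound on $\wr(T)$ for a tree $T$ on $n$ vertices. The paper does \emph{not} go through path minimality; instead it invokes Theorem~4.3 of \cite{CL}, an entropy--eigenvalue inequality valid for every tree, which for $G=P_3^{\circ}$ reads $\wr(T)\geq \exp(H_\lambda(P_3^{\circ}))\lambda^{\,n-1}=2\sqrt{2}\,(1+\sqrt{2})^{\,n-1}$, i.e.\ $\wr(T)/(1+\sqrt{2})^n\geq 4-2\sqrt{2}\approx 1.172$. Plugging this into the spanning-tree reduction gives the constant $\tfrac{7}{9}(4-2\sqrt{2})\approx 0.911>\tfrac{9}{10}$ directly, with no tree-by-tree comparison needed. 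Your route---prove $\wr(T)\geq\wr(P_n)$ and compute $\wr(P_n)$ explicitly---is the exact analogue of what the paper does for independent sets in Corollary~\ref{independent_count}, and it yields the slightly sharper constant $21-14\sqrt{2}\approx 1.201$ for $\wr(T)/(1+\sqrt{2})^n$. The cost is your Step~1, which you correctly flag as the main obstacle: the branch-swap you sketch is essentially the generalised tree shift, and in fact the very reference \cite{CL} the paper cites contains results in this direction (so Step~1 could likely be replaced by a citation rather than a fresh exchange argument). In short: both approaches are valid; the paper's is cleaner because it pulls an off-the-shelf bound, while yours mirrors the independent-set proof more faithfully and gives a marginally better constant at the expense of establishing path minimality for $\wr$.
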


\bigskip

\noindent \textbf{Notation.} In this paper all graphs are simple unless otherwise specified. For a graph $H=(V(H),E(H))$ and an edge set $E'\subseteq E(H)$, the graph $H-E'$ is the graph $(V(H),E(H)\setminus E')$. Similarly, if $V'\subseteq V$ then $H-V'$ denotes the subgraph of $H$ induced by the vertex set $V(H)\setminus V'$. 
\bigskip

\noindent \textbf{This paper is organized as follows.} In the next section we study colorings of bipartite graphs, we will prove Theorem~\ref{coloring_bipartite} and Corollary~\ref{count_coloring}. In Section 3 we study the number of independent sets, we will prove Theorem~\ref{independent} and Corollary~\ref{independent_count}. In Section 4 we study the number of Widom-Rowlinson configurations, we will prove Theorem~\ref{wr-model} and Corollary~\ref{sido:wr}. Finally, we conclude the paper with an interesting open problem. 

\section{Colorings}

In this section we prove Theorem~\ref{coloring_bipartite}. 

\begin{proof}[Proof of Theorem~\ref{coloring_bipartite}]
Note that by the symmetry of colors we have 
$$\mathbb{P}(c(u)=c(v)=1)=\frac{1}{q}\mathbb{P}(c(u)=c(v)),$$
and
$$\mathbb{P}(c(u)=1, c(v)=2)=\frac{1}{q(q-1)}\mathbb{P}(c(u)\neq c(v)).$$
Hence the claim 
$$\mathbb{P}(c(u)=c(v))\leq \frac{1}{q}$$
is equivalent with 
$$\mathbb{P}(c(u)=c(v)=1)\leq \mathbb{P}(c(u)=1, c(v)=2)$$
if $u$ and $v$ are in different parts of the bipartite graph. Equivalently,
$$\mathbb{P}(c(u)=c(v)=1)+\mathbb{P}(c(u)=c(v)=2) \leq \mathbb{P}(c(u)=1, c(v)=2)+\mathbb{P}(c(u)=2, c(v)=1).$$
Now let us conditioning on every colors different from colors $1$ and $2$. So assume that we know the colors of every vertex except the colors of those who gets color $1$ or $2$. Let us consider the partial colorings $c$ in which the color of a vertex is fixed if it is not $1$ or $2$, and the uncolored vertices will be assigned color $1$ or $2$. Note that when we color the uncolored vertices by $1$ and $2$ we only have to make sure that the components of the induced subgraph of uncolored vertices must be properly colored. Now if $u$ and $v$ are in different components then by flipping the coloring on the component of $u$ we see that the color of $u$ and $v$ will be the same or different in the same number of colorings which extends the partial coloring. On the other hand, if they are in the same component then $u$ and $v$ must have different color. Hence the claim follows.

The claim when $u$ and $v$  are in the same part works exactly the same way, only this time if $u$ and $v$ are in the same component then their color is the same.
\bigskip

Now if $H$ is a bipartite graph with a missing edge $e=(u,v)\in E(H)$ between the two parts. Then the probability that in a proper coloring of $H$, the vertices $u$ and $v$ will get different colors is 
$$\frac{\ch(H+e,q)}{\ch(H,q)}\geq 1-\frac{1}{q}\geq \frac{q-1}{q}.$$
So this is just rephrasing the first claim.
\end{proof}

\begin{proof}[Proof of Corollary~\ref{count_coloring}.] Let $e(H)=m$, and let $E(H)=\{e_1,e_2,\dots ,e_m\}$ then by the last claim we have
$$\frac{\ch(H,q)}{\ch(H-\{e_1,\dots,e_{m}\},q)}=\frac{\ch(H,q)}{\ch(H-e_1,q)}\frac{\ch(H-e_1,q)}{\ch(H-\{e_1,e_2\},q)}\dots \frac{\ch(H-\{e_1,\dots,e_{m-1}\},q)}{\ch(H-\{e_1,e_2,\dots ,e_m\},q)}\geq $$
$$\geq \left(\frac{q-1}{q}\right)^m.$$
Note that $H-\{e_1,e_2,\dots ,e_m\}$ is simply the empty graph on $|V(H)|=n$ vertices so $\ch(H-\{e_1,e_2,\dots ,e_m\},q)=q^n$.
Hence
$$\ch(H,q)\geq q^n\left(\frac{q-1}{q}\right)^m.$$

To prove the second statement, let $S$ be the union of $\varepsilon n$ vertex disjoint cycles of length at most $\ell$ together with the remaining isolated vertices. In other words, $S=C_1\cup C_2\cup \dots \cup C_k\cup (n-e(S))K_1$, where $C_i$ is a cycle of even length $t_i\leq \ell$, and $k=\varepsilon n$. Let $E(H)\setminus E(S)=\{e_1,e_2,\dots ,e_r\}$, so $r=e(H)-e(S)$. Then,
$$\frac{\ch(H,q)}{\ch(S,q)}=\frac{\ch(H,q)}{\ch(H-e_1,q)}\frac{\ch(H-e_1,q)}{\ch(H-\{e_1,e_2\},q)}\dots \frac{\ch(H-\{e_1,\dots,e_{r-1}\},q)}{\ch(H-\{e_1,e_2,\dots ,e_r\},q)}\geq $$
$$\geq \left(\frac{q-1}{q}\right)^r.$$
We will need the fact that if $C_{\ell}$ is a cycle of length $\ell$ then 
$$\ch(C_{\ell},q)=(q-1)^{\ell}+(-1)^{\ell}(q-1).$$
This can be seen by induction on $\ell$ by using the recursion 
$$\ch(G,q)=\ch(G-e,q)-\ch(G/e,q),\quad\text{for any $e\in E(G)$.}$$
 Alternatively, if the Reader is familiar with spectral graph theory then we can use the observation that $\ch(C_{\ell},q)=\hom(C_{\ell},K_q)$. In general, $\hom(C_{\ell},G)$ counts the number of closed walks of length $\ell$ in a graph $G$. This is also $\sum_{i=1}^n\lambda_i^{\ell}$, where $\lambda_i$'s are the eigenvalues of the graph $G$. The eigenvalues of $K_q$ is $q-1$ and $-1$ with multiplicity $q-1$. This gives that $\hom(C_{\ell},K_q)=(q-1)^{\ell}+(q-1)(-1)^{\ell}$. In our case, the cycles have even lengths so we can omit the term $(-1)^{\ell}$.
	
Note that $e(S)=|C_1|+\dots +|C_k|$, and as we have seen
$$\ch(C_i,q)=(q-1)^{t_i}+(q-1)=(q-1)^{t_i}\left(1+\frac{1}{(q-1)^{t_i-1}}\right)\geq (q-1)^{t_i}\left(1+\frac{1}{(q-1)^{\ell-1}}\right).$$

Hence
$$\ch(S,q)\geq q^{n-e(S)}(q-1)^{e(S)}\left(1+\frac{1}{(q-1)^{\ell-1}}\right)^k.$$
Then
$$\ch(H,q)\geq ch(S,q)\left(\frac{q-1}{q}\right)^r\geq q^{n-e(S)}(q-1)^{e(S)}\left(1+\frac{1}{(q-1)^{\ell-1}}\right)^{\varepsilon n}\left(\frac{q-1}{q}\right)^r.$$
Hence
$$\ch(H,q)\geq c_q(\varepsilon, \ell)^nq^n\left(\frac{q-1}{q}\right)^{e(H)},$$ 
where
$$c_q(\varepsilon, \ell)=\left(1+\frac{1}{(q-1)^{\ell-1}}\right)^{\varepsilon}.$$
\end{proof}

\begin{Rem} \label{rem} So we see that for a bipartite graph $H$ on $v(H)$ vertices and $e(H)$ edges we have 
$$\frac{\ln \ch(H,q)}{v(H)}\geq \ln(q)+\frac{e(H)}{v(H)}\ln \left(\frac{q-1}{q}\right).$$
It is known that it is asymptotically tight in the following sense: let $G$ be a graph with girth $g$ (i.e., the length of the shortest cycle is $g$), and assume that $q>8d$, where $d$ is the largest degree of the graph $G$. Then Ab\'ert and Hubai \cite{AH} proved that
$$\left|\frac{\ln \ch(G,q)}{v(G)}-\left( \ln(q)+\frac{e(G)}{v(G)}\ln \left(\frac{q-1}{q}\right)\right)\right|\leq 2\frac{(8d/q)^{g-1}}{1-8d/q}.$$
This is true for any graph $G$, not just bipartite. The constant $8$ comes from a theorem of Sokal \cite{So} which asserts that the chromatic polynomial of a graph $G$ has no zero of absolute value bigger than $Cd$, where $C$ is a constant less than $8$. Sokal \cite{So} actually proved $C$ to be $7.96...$, which was improved to $6.91...$ by Fernandez and Procacci \cite{FP}. It is conjectured that $C$ can be taken to be $2$.

In the particular case when $G$ is $d$-regular, it was proved by Bandyopadhyay and Gamarnik \cite{BG} that for $q\geq d+1$ we have
$$\lim_{g \to \infty}\sup_{G\in \mathcal{G}(n,g,d)}\left|\frac{\ln \ch(G,q)}{n}-\left( \ln(q)+\frac{d}{2}\ln \left(\frac{q-1}{q}\right)\right)\right|=0,$$
where $\mathcal{G}(n,g,d)$ is the set of $d$--regular graphs on $n$ vertices with girth at least $g$.
So in the special case of regular graphs we can improve the constant $8$ to $1$. This theorem shows that even for regular graphs our inequality is asymptotically tight if $q\geq d+1$. In other words, for $q\geq d+1$ we have
$$\inf_{G\in \mathcal{G}^b_d} \ch(G,q)^{1/v(G)}=q\left(\frac{q-1}{q}\right)^{d/2},$$
where $\mathcal{G}^b_d$ denotes the family of $d$--regular bipartite graphs. If $d\geq 2$ the infimum is not achieved by a finite graph.

Another related result is about colorings of random graphs. Let $G_n=G(n,\frac{c}{n})$ be an Erd\H os-R\'enyi random graph. Then it is known \cite{CJ,CDGS} that for a fixed $q$ and for small $c$ we have
$$\lim_{n\to \infty}\mathbb{E}\frac{\ln \ch(G_n,q)}{v(G_n)}=\ln(q)+\frac{c}{2}\ln \left(\frac{q-1}{q}\right).$$
Interestingly, there is a phase transition at some $c_q$, where the left hand side starts to be strictly smaller than the right hand side, see for instance \cite{CDGS,Z} for details. Now we see that for bipartite Erd\H os-R\'enyi random graphs this cannot happen. On the other hand, if $H=(A,B,E)$ is a balanced bipartite graph, i.e., $|A|=|B|$ then even for a very dense bipartite graph we have
$$\ch(H,q)\geq \left\{\begin{array}{cl} (q/2)^n &\, \mbox{if}\, \, \, q\, \, \mbox{is even}, \\
((q-1)(q+1)/4)^{n/2} &\, \mbox{if}\, \, \, q\, \, \mbox{is odd}. \end{array} \right. $$
So if we introduce the function
$$f(q)=\left\{\begin{array}{cl} \ln\left(\frac{q}{2}\right) &\, \mbox{if}\, \, \, q\, \, \mbox{is even}, \\
\frac{1}{2}\ln\left( \frac{(q-1)(q+1)}{2}\right) &\, \mbox{if}\, \, \, q\, \, \mbox{is odd}; \end{array} \right. $$
then it is a natural question whether for balanced bipartite Erd\H os-R\'enyi random graphs we have
$$\lim_{n\to \infty \atop 2\ |\ n}\mathbb{E}\frac{\ln \ch(G_n,q)}{v(G_n)}=\max \left(\ln(q)+\frac{c}{2}\ln \left(\frac{q-1}{q}\right),f(q)\right).$$
\end{Rem}

\section{Number of independent sets}

In this section we prove Theorem~\ref{independent} and Corollary~\ref{independent_count}.

\begin{proof}[Proof of Theorem~\ref{independent}.]
Let $i(H \ |\ \textrm{condition})$ be the number of independent sets of $H$ satisfying the given condition. For $e=(u,v)\in E(H)$ we have
$$i(H)=i(H\ |\ u,v\notin I)+i(H\ |\ u\in I,v\notin I)+i(H\ |\ u\notin I, v\in H)+i(H\ |\ u,v\in I).$$
Note that
$$i(H\ |\ u,v\notin I)=i(H-\{u,v\}),\quad i(H\ |\ u\in I,v\notin I)=i(H-\{u,v\}-N(u))$$
and
$$i(H\ |\ u\notin I, v\in H)=i(H-\{u,v\}-N(v)), \, \, i(H\ |\ u,v\in I)=0.$$
Similarly,
$$i(H-e)=i(H-e\ |\ u,v\notin I)+i(H-e\ |\ u\in I,v\notin I)+i(H-e\ |\ u\notin I, v\in H)+i(H-e\ |\ u,v\in I).$$
Note that
$$i(H-e\ |\ u,v\notin I)=i(H-\{u,v\}),\quad i(H-e\ |\ u\in I,v\notin I)=i(H-\{u,v\}-N(u)),$$ 
$$i(H-e\ |\ u\notin I,v\in I)=i(H-\{u,v\}-N(v)), 
$$
and
$$ i(H-e\ |\ u,v\in I)=i(H-\{u,v\}-N(u)-N(v)).$$
Clearly,
$$i(H-\{u,v\}-N(u)-N(v))\leq \min(i(H-\{u,v\}),i(H-\{u,v\}-N(u)),i(H-\{u,v\}-N(v)),$$
whence inequality~\eqref{eq:ind} follows. 
\end{proof}

\begin{proof}[Proof of Corollary~\ref{independent_count}.]
The proof of this statement follows the same way from Theorem~\ref{independent} as
Corollary~\ref{count_coloring} followed from Theorem~\ref{coloring_bipartite}. Indeed,
$$i(H)\geq 2^n\left(\frac{3}{4}\right)^{e(H)}$$
immediately follows from the fact that the empty graph on $n$ vertices has $2^n$ independent sets and inequality~\eqref{eq:ind}.

If $H$ is connected then let $T_n$ be a spanning tree of $H$. Similarly, inequality~\eqref{eq:ind} implies
$$i(H)\geq i(T_n)\left(\frac{3}{4}\right)^{e(H)-(n-1)}.$$
Note that among trees on $n$ vertices the path $P_n$ has the smallest number of independent sets, for a proof see for instance \cite{PT,C}. Hence
$$i(H)\geq i(P_n)\left(\frac{3}{4}\right)^{e(H)-(n-1)}.$$
It is easy to see that $i(P_n)=i(P_{n-1})+i(P_{n-2})$ and $i(P_1)=2,i(P_2)=3$, whence $i(P_n)=F_{n+2}$, where $F_0=0,F_1=1$ and $F_n=F_{n-1}+F_{n-2}$ if $n\geq 2$, the sequence of {\em Fibonacci-numbers}. It is easy to see by induction (or by the explicit formula for $F_n$) that
$$i(P_n)>\left(\frac{1+\sqrt{5}}{2}\right)^n.$$
Hence
$$i(H)>\left(\frac{1+\sqrt{5}}{2}\right)^n\left(\frac{3}{4}\right)^{e(H)-(n-1)}=\frac{3}{4} \left(\frac{1+\sqrt{5}}{3}\right)^n\cdot 2^n\left(\frac{3}{4}\right)^{e(H)}.$$
\end{proof}

\begin{Rem} For bipartite graphs a much better lower bound follows from the work of N. Ruozzi \cite{R}. This lower bound connects the number of independent sets with the so-called Bethe partition function of the hard-core model.

\end{Rem}

\section{Widom-Rowlinson model}

The purpose of this section is to prove Theorem~\ref{wr-model}, we will use a similar approach as in the proof of Theorem~\ref{independent} with an extra crucial lemma.

\begin{proof}[Proof of Theorem~\ref{wr-model}]
For each $\phi\in\Hom(H,P_3^{\circ})$ and a vertex $u$ of $H$ let us write $u\in A,B$ or $C$ if and only if $\phi(u)=a,b$ or $c$, respectively. Then, for each $e=(u,v)\in E(H)$ we have
\begin{align*}
\wr(H)&=\wr(H\,\,|\,\,u,v\in A)+\wr(H\,\,|\,\,u\in A,v\in B)+\wr(H\,\,|\,\,u\in A, v\in C)\\
&\quad+\wr(H\,\,|\,\,u\in B, v\in A)+\wr(H\,\,|\,\,u,v\in B)+\wr(H\,\,|\,\,u\in B, v\in C)\\
&\quad+\wr(H\,\,|\,\,u\in C, v\in A)+\wr(H\,\,|\,\,u\in C,v\in B)+\wr(H\,\,|\,\,u, v\in C).
\end{align*}
In view of the symmetry of $P_3^{\circ}$, the above expression reduces to
\begin{align*}
\wr(H)&=2\wr(H\,\,|\,\,u,v\in A)+2\wr(H\,\,|\,\,u\in A,v\in B)+2\wr(H\,\,|\,\,u\in A, v\in C)\\
&\quad+2\wr(H\,\,|\,\,u\in B, v\in A)+\wr(H\,\,|\,\,u,v\in B)\\
&=2\biggl(\wr(H\,\,|\,\,u,v\in A)+\wr(H\,\,|\,\,u\in A,v\in B)+\wr(H\,\,|\,\,u\in B, v\in A)\biggr)\\
&\quad+\wr(H\,\,|\,\,u,v\in B),
\end{align*}
 since $(u,v)$ is an edge of $H$ and so $\wr(H\,\,|\,\,u\in A, v\in C)$ must vanish. On the other hand, we have
\begin{align*}
\wr(H-e)&=2\wr(H-e\,\,|\,\,u,v\in A)+2\wr(H-e\,\,|\,\,u\in A,v\in B)+2\wr(H-e\,\,|\,\,u\in A, v\in C)\\
&\quad+2\wr(H-e\,\,|\,\,u\in B, v\in A)+\wr(H-e\,\,|\,\,u,v\in B).
\end{align*}
It is easy to see that 
\begin{align*}
\wr(H\,\,|\,\,u,v\in A)&=\wr(H-e\,\,|\,\,u,v\in A),\\
\wr(H\,\,|\,\,u\in A,v\in B)&=\wr(H-e\,\,|\,\,u\in A,v\in B),\\
\wr(H\,\,|\,\,u\in B, v\in A)&=\wr(H-e\,\,|\,\,u\in B,v\in A),\\
\wr(H\,\,|\,\,u,v\in B)&=\wr(H-e\,\,|\,\,u,v\in B)
\end{align*}
and
$$
\wr(H\,\,|\,\,u,v\in A)\leq\min\{\wr(H\,\,|\,\,u\in A,v\in B),\wr(H\,\,|\,\,u\in B, v\in A),\wr(H\,\,|\,\,u,v\in B)\}.
$$
 These inequalities imply the desired inequality~\eqref{eq:wr} if we take the following lemma into account.
 
\begin{Lemma}For each $e=(u,v)\in E(H)$ we have
$$\wr(H-e\,\,|\,\,u,v\in A)\geq \wr(H-e\,\,|\,\,u\in A, v\in C).$$
\end{Lemma}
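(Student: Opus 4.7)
The plan is to construct an explicit injection $f$ from the set of WR colorings of $H-e$ counted by $\wr(H-e\,|\,u\in A,\,v\in C)$ into the set of WR colorings counted by $\wr(H-e\,|\,u,v\in A)$. Given a WR coloring $\phi$ of $H-e$ with $\phi(u)=a$ and $\phi(v)=c$, I would look at the subgraph of $H-e$ induced on the non-$b$ vertices $\{x:\phi(x)\in\{a,c\}\}$. Because any WR coloring forbids an edge whose endpoints are coloured $a$ and $c$, every connected component of this induced subgraph must be monochromatic, lying entirely in $A$ or entirely in $C$.

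Now let $K$ be the connected component of $v$ in this non-$b$ subgraph; then $K\subseteq C$, and crucially $u\notin K$, since otherwise $u$ and $v$ would belong to a common monochromatic component, contradicting $\phi(u)\neq\phi(v)$. Define $f(\phi)$ to agree with $\phi$ off $K$ and to recolour every vertex of $K$ from $c$ to $a$. Then $f(\phi)(u)=a$ (as $u\notin K$) and $f(\phi)(v)=a$ (as $v\in K$), so the target of $f$ is correct. To check that $f(\phi)$ is actually a valid WR coloring of $H-e$, observe that every edge of $H-e$ meeting $K$ either lies inside $K$ (now $aa$-coloured) or leaves $K$; in the latter case the outside endpoint must be coloured $b$ by the very maximality of $K$ as a non-$b$ component, so no forbidden $ac$-edge is created.

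Injectivity of $f$ is then immediate: the set of $b$-coloured vertices is left unchanged by $f$, so the non-$b$ induced subgraph of $H-e$ is identical in $\phi$ and in $f(\phi)$, and therefore $K$ is intrinsic to $f(\phi)$ as the connected component of $v$ in that subgraph; the coloring $\phi$ is recovered from $f(\phi)$ by flipping $K$ back from $a$ to $c$. I do not foresee a genuine obstacle in this plan; the only delicate point is verifying that $u$ and $v$ lie in distinct non-$b$ components of $H-e$, which is supplied for free by $\phi(u)\neq\phi(v)$ and does not even rely on the fact that $e$ has been deleted. The construction is essentially a cluster-flip of Swendsen--Wang type, adapted to the two-species symmetry of the Widom--Rowlinson model.
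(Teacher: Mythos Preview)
Your proof is correct and rests on exactly the same observation as the paper's: once the set of $b$-coloured vertices is fixed, the induced subgraph on the non-$b$ vertices of $H-e$ splits into monochromatic components, and the component of $v$ can be flipped between $a$ and $c$ without touching $u$. The paper packages this as a conditional-probability argument (condition on the set $B$ and compare $\mathbb{P}(v\in A\mid u\in A,B)$ with $\mathbb{P}(v\in C\mid u\in A,B)$), while you package it as an explicit Swendsen--Wang--style injection; the underlying combinatorics are identical.
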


\begin{proof}
An equivalent way of phrasing the above lemma is that for a uniformly chosen random Widom-Rowlinson configurations of $H-e$ we have
$$\mathbb{P}(v\in A|u\in A)\geq \mathbb{P}(v\in C|u\in A).$$
Now let us condition not only on $u\in A$, but also on the knowledge of the set $B$. Once we know the set $B$, all connected components of $V(H-e)\setminus B$ must be monochromatic. If $u$ and $v$ are in different connected components of $V(H-e)\setminus B$ then with equal probability, the color of $v$ is red or blue. If $u$ and $v$ are in the same component of $V(H-e)\setminus B$ then with probability $1$ they have the same color so that in all cases we have
$$\mathbb{P}(v\in A|u\in A; B)\geq \mathbb{P}(v\in C|u\in A; B).$$
Hence
$$\mathbb{P}(v\in A|u\in A)\geq \mathbb{P}(v\in C|u\in A),$$
which finishes the proof of the lemma.
\end{proof}

The proof of Theorem~\ref{wr-model} is thus completed. 
\end{proof}

\begin{proof}[Proof of Corollary~\ref{sido:wr}.]
The first statement follows as before. So we only prove the second statement. 
If $H$ is connected then let $T_n$ be a spanning tree of $H$. Then, it follows from inequality~\eqref{eq:wr} that
$$\wr(H)\geq \wr(T_n)\left(\frac{7}{9}\right)^{e(H)-(n-1)}.$$
Now we use Theorem 4.3 of \cite{CL} for $G=P_3^{\circ}$, that is the following lower bound on the number of homomorphisms of a tree $T_n$ on $n$ vertices into a given graph $G$:
$$\hom(T_n,G)\geq \exp(H_{\lambda}(G))\lambda^{n-1},$$
where $\lambda$ is the largest eigenvalue of $G$, in our case it is $1+\sqrt{2}$, and $H_{\lambda}(G)$ is defined as follows:
if $\underline{y}$ is the positive eigenvector of unit length corresponding to the eigenvalue $\lambda$, then $q_i=y_i^2$ is a probability distribution $Q$ on the vertices, its entropy is 
$$H_{\lambda}(G)=\sum_{i=1}^nq_i\log \frac{1}{q_i}.$$
In our case $\underline{y}=\left(\frac{1}{2},\frac{1}{\sqrt{2}},\frac{1}{2}\right)$ so $Q=\left(\frac{1}{4},\frac{1}{2},\frac{1}{4}\right)$,
its entropy is $\frac{3}{2}\log 2$. Hence
$$\wr(T_n)=\hom(T_n,P_3^{\circ})\geq 2\sqrt{2} (1+\sqrt{2})^{n-1}.$$
Therefore, 
$$\wr(H)\geq 2\sqrt{2} (1+\sqrt{2})^{n-1}\left(\frac{7}{9}\right)^{e(H)-(n-1)}.$$
We have
$$2\sqrt{2} (1+\sqrt{2})^{n-1}\left(\frac{7}{9}\right)^{e(H)-(n-1)}=2\sqrt{2}\cdot \frac{7}{9}\frac{1}{1+\sqrt{2}}\left(\frac{3(1+\sqrt{2})}{7}\right)^n\cdot 3^n\left(\frac{7}{9}\right)^{e(H)}>$$
$$>\frac{9}{10}\left(\frac{3(1+\sqrt{2})}{7}\right)^n\cdot 3^n\left(\frac{7}{9}\right)^{e(H)}$$
and the desired inequality follows. 
\end{proof}

\section{Concluding remarks}

It is a very natural question that for which graphs $G$  the inequality
$$
\frac{\hom(H,G)}{\hom(H-e,G)}\geq \frac{\hom(K_2,G)}{v(G)^2}
$$
holds true for every (bipartite) graph $H$. It is known that one cannot handle Sidorenko's conjecture only using this tool: London \cite{Lon} showed a counterexample even when $H=P_4$, a path on $4$ vertices. We remark that in London's counterexample $G$ was a weighted graph in this case as London was interested in an inequality for matrices, not graphs. Unfortunately, one can show that it implies that there are counterexamples when $G$ is a simple graph. If the Reader is familiar with graphon theory then we can offer the following argument: consider the graphon corresponding to the weighted matrix (maybe one needs to renormalize it to make the values between $0$ and $1$), and then approximate the graphon with a large graph. If the approximation is sufficiently good then the obtained graph will also violate the above inequality. 
Of course, it only means that the above inequality cannot be true for every graph $G$, but it is still a meaningful problem to identify and study graphs for which the above inequality is true.
%
\subsection*{Acknowledgment}
We thank the anonymous referee for the helpful comments.


\begin{thebibliography}{99} 

\bibitem{AH} M.\ Ab\'ert, T.\ Hubai: \textit{Benjamini--Schramm
  convergence and the distribution of chromatic roots for sparse graphs}, Combinatorica \textbf{35} (2) (2015), 127--151.
	
\bibitem{BG} A. Bandyopadhyay and D. Gamarnik: \textit{Counting without sampling. New algorithms for enumeration problems using statistical physics}, Proceedings of 17th ACM-SIAM Symposium on Discrete Algorithms
(SODA), 2006	

\bibitem{BR} G.R. Blakley, P.A. Roy: \textit{A H\"older type inequality for symmetric matrices with nonnegative entries}, Proc. Amer. Math. Soc. \textbf{16} (1965) 1244--1245.

\bibitem{BHW} G.R. Brightwell, O. H\"aggstr\"om, P. Winkler: \textit{Nonmonotonic behavior in hard-core and Widom-Rowlinson models}, J. Statist. Phys., {\bf94} (1999), 415--435.

\bibitem{CPT} E. Cohen, W. Perkins, P. Tetali: \textit{On the Widom-Rowlinson Occupancy Fraction in Regular Graphs}, Arxiv preprint 1512.06398.

\bibitem{CJ} A. Coja-Oghlan, N. Jaafari: \textit{On the Potts antiferromagnet on random graphs}, Arxiv preprint 1603.00081.

\bibitem{CFS} D. Conlon, J. Fox, B. Sudakov: \textit{An approximate version of Sidorenko's conjecture}, Geom. Funct. Anal. \textbf{20} (2010), 1354--1366.

\bibitem{CKLL} D. Conlon, J. H. Kim, C. Lee, J. Lee: \textit{Some advances on Sidorenko's conjecture},
Arxiv preprint 1510.06533.

\bibitem{CDGS} P. Contucci, S. Dommers, C. Giardina, S. Starr: \textit{Antiferromagnetic Potts model on the Erd\H os-R\'enyi random graph}, Communications in Mathematical Physics \textbf{323} (2013), 517--554.

\bibitem{C} P. Csikv\'ari: \textit{On a poset of trees II}, Journal of Graph Theory 74 (2013), 81--103. 

\bibitem{CL} P. Csikv\'ari, Z. Lin: \textit{Graph homomorphisms between trees}, Electron. J. Combin. 24(4) (2014), P 4.9.

\bibitem{CR} J. Cutler, J. Radcliffe: \textit{Extremal Graphs for Homomorphisms},  J. Graph Theory, {\bf67} (2011), 261--284.


\bibitem{FP} R. Fernandez, A. Procacci: \textit{Regions without complex zeros for chromatic polynomials on graphs with bounded degree}, 
Combinatorics, Probability and Computing \textbf{17}(2) (2008), 225--238.

\bibitem{H} H. Hatami: \textit{Graph norms and Sidorenko's conjecture}, Israel J. Math. \textbf{175} (2010), 125--150.


\bibitem{KLL} J. Kim, C. Lee, J. Lee: \textit{Two approaches to Sidorenko's conjecture}, to appear in Trans. Amer. Math. Soc..


\bibitem{LS} X. Li, B. Szegedy: \textit{On the logarithmic calculus and Sidorenko's conjecture}, to appear in Combinatorica.

\bibitem{Lon} D. London: \textit{Two inequalities in nonnegative symmetric matrices}, Pac. J. Math. \textbf{16} (1966),
 515--536.

\bibitem{L} L. Lov\'asz: \textit{Subgraph densities in signed graphons
and the local Sidorenko conjecture}, Electron. J. Combin. \textbf{18}(1) (2011), P127.


\bibitem{P} O. Parczyk: \textit{On Sidorenko's conjecture}, Master's thesis, Freie Universit\"at, Berlin.


\bibitem{PT} H. Prodinger and R. F. Tichy: \textit{Fibonacci numbers of graphs}, Fibonacci Quart., \textbf{20}(1) (1982), 16--21

\bibitem{R} N. Ruozzi: \textit{The Bethe partition function of log-supermodular graphical models}, Advances in Neural Information Processing Systems (NIPS) (2012), pp. 117--125 

\bibitem{S1} A. F. Sidorenko: \textit{Inequalities for functionals generated by bipartite graphs}, Diskret. Mat. \textbf{3} (1991),
50--65 (in Russian), Discrete Math. Appl. \textbf{2} (1992), 489--504 (in English).

\bibitem{S2} A. F. Sidorenko: \textit{A correlation inequality for bipartite graphs}, Graphs Combin. \textbf{9} (1993), 201--204.

\bibitem{So} A. D. Sokal: \textit{Bounds on the complex zeros of (di)chromatic polynomials and Potts-model partition functions}, Combinatorics, Probability and Computing \textbf{10} (2001), 41--77.


\bibitem{Sz1} B. Szegedy: \textit{Sparse graph limits, entropy maximization and transitive graphs}, Arxiv preprint 1504.00858.

\bibitem{Sz2} B. Szegedy: \textit{An information theoretic approach to Sidorenko's conjecture}, Arxiv preprint 1406.6738.

\bibitem{WR} B. Widom and J. S. Rowlinson: \textit{New model for the study of liquid-vapor phase transition},
J. Chem. Phys. \textbf{52} (1970), 1670--1684

\bibitem{Z} L. Zdeborov\'a, F. Krzakala: \textit{Phase transition in the coloring of random graphs}, Phys. Rev. E \textbf{76} (2007), 031131.

\end{thebibliography}
\end{document}